\newtheorem{theorem}{Theorem}
\theoremstyle{plain}
\newtheorem{definition}{Definition}
\newtheorem{example}{Example}
\newtheorem{proposition}{Proposition}
\numberwithin{equation}{section}
\begin{document}

\begin{center}
\bigskip {\Huge A New View on Soft Normed Spaces}

\textbf{\ Tunay BILGIN$^{\mathrm{a}}$, Sadi BAYRAMOV\textbf{$^{\mathrm{b}}$, 
}\c{C}igdem Gunduz(ARAS)\textbf{$^{\mathrm{c}}$, Murat Ibrahim YAZAR$^{%
\mathrm{b}}$}}

\medskip

$^{\mathrm{a}}$\textit{Department of Mathematics,}\\[0pt]
Yuzuncu Y\i l University\textit{$,$Van$,$Turkey}\\[0pt]

$^{\mathrm{b}}$\textit{Department of Mathematics$,$ Faculty of Science and
Letters,}\\[0pt]
\textit{Kafkas University$,$ TR-}$36100$\textit{\ Kars$,$ Turkey}\\[0pt]

$^{\mathrm{c}}$\textit{Department of Mathematics$,$ Faculty of Science and
Letters,}\\[0pt]
\textit{Kocaeli University$,$ TR-}$36100$\textit{\ Kocaeli$,$ Turkey}\\[0pt]

\textbf{e-mails: tbilgin@yyu.edu.tr, baysadi@gmail.com,
carasgunduz@gmail.com, miy248@yahoo.com \ }

\bigskip

\textbf{Abstract}
\end{center}

\begin{quotation}
In this paper, we work on the structure of soft linear spaces over a field K
and investigate some of its properties. Here, we use the concept of the soft
point which was introduced in \cite{bayram,sam2}. We then introduce the soft
norm in soft linear spaces. Finally, we examine the properties of this soft
normed space and present some investigations about soft continuous operators
in the space.\textbf{{\large {\ }}}
\end{quotation}

\noindent \textbf{Key Words and Phrases.} Soft norm, soft linear space, soft
continuous linear operators.

\section{\protect\bigskip INTRODUCTION}

Molodtsov \cite{molod} introduced the notion of soft set to overcome
uncertainties which cannot be dealth with by classical methods in many areas
such as environmental science, economics, medical science, engineering and
social sciences. This theory is applicable where there is no clearly defined
mathematical model. Recently, many papers concerning soft sets have been
published; see [1-7]

The concept of soft point was defined in different approaches. Among these,
the soft point given in \cite{bayram,sam2} is more accurate. Also in the
study \cite{sam2}, S.Das and et all introduced the concept of soft metric
and investigated some properties of soft metric spaces.

Because of the difficulties to define a vector space over a soft set based
upon the concept of soft point S.Das and et all introduced the concept of
soft element in \cite{element} and defined a soft vector space by using the
concept of soft element. After then they studied on soft normed spaces, soft
linear operators, soft inner product spaces and their basic properties \cite%
{donlin, dlin, inner}.

In this paper, by using the concept of soft point we define the soft vector
space in a new point of view and investigate some of its properties. We then
introduce the soft norm in soft vector spaces. Finally, we examine the
properties of the soft normed space and present some investigations about
soft continuous linear operators in the space.

\section{\textbf{PRELIMINARIES}}

\begin{definition}
(\cite{molod}) A pair $(F,E)$ is called a soft set over $X$ $,$where $F$ is
a mapping given by $F:E\rightarrow P(X).$

\begin{definition}
(\cite{maji2}) A soft set $(F,E)$ over $X$ is said to be a null soft set
denoted by $\Phi ,$ if for all $e\in E,$ $F(e)=\phi $ \ (null set).
\end{definition}
\end{definition}

\begin{definition}
(\cite{maji2}) A soft set $(F,E)$ over $X$ is said to be an absolute soft
set denoted by $\tilde{X}$, if for all $\varepsilon \in E,$ $F(e)=X$.
\end{definition}

\begin{definition}
(\cite{sam1}) Let $%
\mathbb{R}
$ be the set of real numbers and $B(%
\mathbb{R}
)$ be the collection of all non-empty bounded subsets of $%
\mathbb{R}
$ and $E$ taken as a set of parameters. Then a mapping $F:E\rightarrow B(%
\mathbb{R}
)$ is called a soft real set. If a soft real set is a singleton soft set, it
will be called a soft real number and denoted by $\tilde{r},$ $\tilde{s},$ $%
\tilde{t}$ etc.
\end{definition}

$\tilde{0},$ $\tilde{1}$ are the soft real numbers where $\tilde{0}(e)=0,$ $%
\tilde{1}(e)=1$ for all $e\in E$ , respectively.

\begin{definition}
(\cite{sam1}) Let $\tilde{r},$ $\tilde{s}$ be two soft real numbers. then
the following statements
\end{definition}

\begin{enumerate}
\item[(i)] \qquad\ $\tilde{r}\tilde{\leq}\tilde{s}$ if \ $\tilde{r}(e)\leq $ 
$\tilde{s}(e),$ for all $e\in E$ ;

\item[(ii)] \qquad $\tilde{r}\tilde{\geq}\widetilde{s}$ if \ $\tilde{r}%
(e)\geq $ $\tilde{s}(e),$ for all $e\in E$ ;

\item[(iii)] \qquad $\tilde{r}\tilde{<}$ $\widetilde{s}$ if \ $\tilde{r}(e)<$
$\tilde{s}(e),$ for all $e\in E$ ;

\item[(iv)] \qquad $\tilde{r}\tilde{>}$ $\tilde{s}$ if \ $\tilde{r}(e)>$ $%
\tilde{s}(e),$ for all $e\in E$ ;

hold.
\end{enumerate}

\begin{definition}
(\cite{bayram, sam2}) A soft set $(F,E)$ over $X$ is said to be a soft point
if there is exactly one $e\in E$, such that $F(e)=\left\{ x\right\} $ for
some $x\in X$ and $F(e^{\prime })=\varnothing ,$ $\forall e^{\prime }\in
E/\left\{ e\right\} $. It will be denoted by $\tilde{x}_{e}.$
\end{definition}

\begin{definition}
(\cite{bayram, sam2}) Two soft point $\tilde{x}_{e},$ $\tilde{y}_{e^{\prime
}}$ are said to be equal if $e=e^{\prime }$ and $x=y$. Thus $\tilde{x}%
_{e}\neq $ $\tilde{y}_{e^{\prime }}$ $\Leftrightarrow x\neq y$ or $e\neq
e^{\prime }$.
\end{definition}

\begin{proposition}
(\cite{bayram}) Every soft set can be expressed as a union of all soft
points belonging to it. Conversely, any set of soft points can be considered
as a soft set.
\end{proposition}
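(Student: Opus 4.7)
The plan is to prove the two claims separately, relying on pointwise reasoning through the parameter set $E$.

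For the first assertion, I would start with an arbitrary soft set $(F,E)$ over $X$. The natural candidate family is $\mathcal{S} = \{\tilde{x}_e : e \in E,\ x \in F(e)\}$, i.e. every soft point whose value is a single element already present in $F(e)$. First I would record that a soft point $\tilde{x}_e$ belongs to $(F,E)$ precisely when $x \in F(e)$, which is immediate from the defining property $F(e) = \{x\}$ and $F(e') = \varnothing$ for $e' \neq e$. Next, writing the union of soft sets componentwise (a soft union $\bigsqcup_i (F_i,E)$ is the soft set whose value at $e$ is $\bigcup_i F_i(e)$), I would evaluate $\bigsqcup_{\tilde{x}_e \in \mathcal{S}} \tilde{x}_e$ at an arbitrary parameter $e_0 \in E$: the only soft points in $\mathcal{S}$ contributing a nonempty value at $e_0$ are those of the form $\tilde{x}_{e_0}$ with $x \in F(e_0)$, each contributing $\{x\}$. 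Taking the union over these gives exactly $F(e_0)$, so the union coincides with $(F,E)$ parameterwise, hence as soft sets.

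For the converse, given any family $\mathcal{T} = \{\tilde{x}^{\alpha}_{e_\alpha}\}_{\alpha \in \Lambda}$ of soft points, I would define a mapping $G : E \to P(X)$ by
\[
G(e) = \{\, x \in X : \tilde{x}_e \in \mathcal{T}\,\}
\]
for each $e \in E$. By construction $(G,E)$ is a soft set over $X$, and a direct check shows that the soft points belonging to $(G,E)$ are exactly the elements of $\mathcal{T}$, while the union-formula from the first part also recovers $(G,E)$ as $\bigsqcup_\alpha \tilde{x}^{\alpha}_{e_\alpha}$. This closes the converse.

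The statement is essentially a bookkeeping exercise, so there is no deep obstacle; the only point that requires a little care is making the notion of soft-set union precise enough to evaluate at a fixed parameter $e_0$, because the indexing family $\mathcal{S}$ may be infinite. Once the componentwise definition of union is in place, both directions reduce to elementary set equalities at each $e \in E$.
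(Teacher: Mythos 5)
Your argument is correct: the componentwise evaluation of the union at each parameter $e_0$ yields exactly $F(e_0)$, and the converse construction $G(e)=\{x: \tilde{x}_e\in\mathcal{T}\}$ is the right one. The paper itself states this proposition without proof (it is quoted from the reference on soft locally compact spaces), but your argument is the standard one that any proof of this fact would use, and it is complete once the componentwise definition of soft union is fixed, as you note.
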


\textit{Let }$SP(\tilde{X})$\textit{\ be the collection of all soft points
of }$\tilde{X}$\textit{\ and }$%
\mathbb{R}
(E)^{\ast }$\textit{\ denote the set of all non-negative soft real numbers.}

\begin{definition}
(\cite{sam2}) A mapping $\tilde{d}:SP(\tilde{X})\times SP(\tilde{X}%
)\rightarrow 
\mathbb{R}
(E)^{\ast }$ is said to be a soft metric on the soft set $\tilde{X}$ if $%
\tilde{d}$ satisfies the following conditions:
\end{definition}

\begin{enumerate}
\item[(M1)] $\tilde{d}(\tilde{x}_{e_{1}},\tilde{y}_{e_{2}})\tilde{\geq}%
\tilde{0}$ \textit{for all }$\tilde{x}_{e_{1}},\tilde{y}_{e_{2}}\tilde{\in}%
\tilde{X},$

\item[(M2)] $\tilde{d}(\tilde{x}_{e_{1}},\tilde{y}_{e_{2}})=\tilde{0}$%
\textit{\ if and only if }$\tilde{x}_{e_{1}}=\tilde{y}_{e_{2}}\tilde{\in}%
\tilde{X},$

\item[(M3)] $\tilde{d}(\tilde{x}_{e_{1}},\tilde{y}_{e_{2}})=\tilde{d}(\tilde{%
y}_{e_{2}},\tilde{x}_{e_{1}})$\textit{\ for all }$\tilde{x}_{e_{1}},\tilde{y}%
_{e_{2}}\tilde{\in}\tilde{X},$

\item[(M4)] \textit{For all }$\tilde{x}_{e_{1}},\tilde{y}_{e_{2}},\tilde{z}%
_{e_{3}}\tilde{\in}\tilde{X},$\textit{\ }$\tilde{d}(\tilde{x}_{e_{1}},\tilde{%
z}_{e_{3}})\tilde{\leq}$\textit{\ }$\tilde{d}(\tilde{x}_{e_{1}},\tilde{y}%
_{e_{2}})+\tilde{d}(\tilde{y}_{e_{2}},\tilde{z}_{e_{3}}).$
\end{enumerate}

\textit{The soft set }$\tilde{X}$\textit{\ with a soft metric }$\tilde{d}$%
\textit{\ is called a soft metric space and denoted by }$(\tilde{X},\tilde{d}%
,E)$\textit{.}

\section{Soft Normed Linear Spaces}

In this section, by using the concept of soft point we define the soft
vector space and soft norm in a new point of view and investigate the
properties of the soft normed space.

Let $X$ \ be a vector space over a field $K$ $(K=%
\mathbb{R}
)$ and the parameter set $E$ be the real number set $%
\mathbb{R}
.$

\begin{definition}
Let $(F,E)$ be a soft set over $X.$ The soft set $(F,E)$ is said to be a
soft vector and denoted by $\tilde{x}_{e}$ if there is exactly one $e\in E$,
such that $F(e)=\left\{ x\right\} $ for some $x\in X$ and $F(e^{\prime
})=\varnothing ,$ $\forall e^{\prime }\in E/\left\{ e\right\} .$
\end{definition}

The set of all soft vectors over $\tilde{X}$ will be denoted by $SV(\tilde{X}%
).$

\begin{proposition}
The set $SV(\tilde{X})$ is a vector space according to the following
operations;
\end{proposition}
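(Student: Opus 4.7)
Since the paper identifies each soft vector $\tilde{x}_e$ with a pair $(x,e)\in X\times E$ and takes the parameter set $E=\mathbb{R}$ to already be a vector space over $K$, the natural choice of operations on $SV(\tilde{X})$ is the componentwise one,
$$\tilde{x}_{e_1}\,\tilde{+}\,\tilde{y}_{e_2}\ :=\ \widetilde{(x+y)}_{\,e_1+e_2},\qquad \lambda\cdot\tilde{x}_e\ :=\ \widetilde{(\lambda x)}_{\,\lambda e}\quad(\lambda\in K).$$
My plan is first to check that these rules are well-defined, i.e.\ that the outputs are genuine soft vectors in the sense of the previous definition: this is immediate because $x+y$ is a single element of $X$ and $e_1+e_2$ a single element of $E$, so the singleton/emptiness conditions persist.

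Next I would identify the neutral element and the inverses: the zero is $\tilde{0}_{0}$, where the first $0$ is the zero of $X$ and the second is $0\in E=\mathbb{R}$, and the additive inverse of $\tilde{x}_e$ is $\widetilde{(-x)}_{-e}$. With these in hand, each remaining vector space axiom --- associativity and commutativity of $\tilde{+}$, the two distributive laws, associativity of scalar multiplication, and the identity $1\cdot\tilde{x}_e=\tilde{x}_e$ --- reduces to the conjunction of the corresponding axiom in $X$ (applied to the $X$-component) and in $\mathbb{R}$ (applied to the parameter). For example, the distributive identity $\lambda(\tilde{x}_{e_1}\,\tilde{+}\,\tilde{y}_{e_2})=\lambda\tilde{x}_{e_1}\,\tilde{+}\,\lambda\tilde{y}_{e_2}$ reads
$$\widetilde{(\lambda(x+y))}_{\,\lambda(e_1+e_2)}=\widetilde{(\lambda x+\lambda y)}_{\,\lambda e_1+\lambda e_2},$$
which holds by distributivity in $X$ and in $\mathbb{R}$.

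The only real obstacle lies in fixing the operations correctly before starting the axiom check. For instance, a tempting alternative such as $\lambda\cdot\tilde{x}_e=\widetilde{(\lambda x)}_e$ (leaving the parameter untouched) would fail $(\lambda+\mu)\tilde{x}_e=\lambda\tilde{x}_e\,\tilde{+}\,\mu\tilde{x}_e$, because on the right the parameters compound to $2e\neq e$. Once the componentwise definition above is committed to, the whole proposition is essentially the observation that $SV(\tilde{X})\cong X\oplus\mathbb{R}$ as a $K$-vector space, and every axiom is a one-line bookkeeping check; I do not expect any nontrivial technical difficulty.
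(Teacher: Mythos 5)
Your proposal is correct and takes essentially the same approach as the paper: the paper uses the same componentwise operations, identifies $\tilde{\theta}_{0}$ as the zero vector and $(\widetilde{-x})_{(-e)}$ as the additive inverse, and dismisses the remaining axioms as routine. Your explicit identification $SV(\tilde{X})\cong X\oplus\mathbb{R}$ simply makes precise what the paper leaves as ``easy to see.''
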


\begin{enumerate}
\item $\tilde{x}_{e}+\tilde{y}_{e^{\prime }}=(\widetilde{x+y})_{(e+e^{\prime
})}$\textit{\ \ for every }$\tilde{x}_{e},\tilde{y}_{e^{\prime }}\in SV(%
\tilde{X});$

\item $\tilde{r}.\tilde{x}_{e}=(\widetilde{rx})_{(re)}$\textit{\ for every }$%
\tilde{x}_{e}\in SV(\tilde{X})$\textit{\ and for every soft real number }$%
\tilde{r}.$
\end{enumerate}

\begin{definition}
\begin{proof}
If $\theta \in X$ is a zero vector and $e=0\in 
\mathbb{R}
$ then $\tilde{\theta}_{0}$ is a soft zero vector in $SV(\tilde{X}).$
Furthermore, $(\widetilde{-x})_{(-e)}$ is the inverse of the soft vector $%
\tilde{x}_{e}.$ It is easy to see that the set $SV(\tilde{X})$ is a vector
space .
\end{proof}
\end{definition}

\begin{definition}
The set $SV(\tilde{X})$ is called soft vector space.
\end{definition}

\begin{definition}
A set $S=\left\{ \tilde{x}_{e_{1}}^{1},\tilde{x}_{e_{2}}^{2},...,\tilde{x}%
_{e_{n}}^{n}\right\} $ of soft vectors in $SV(\tilde{X})$ is said to be
linearly independent if the following condition 
\begin{equation*}
\tilde{r}_{1.}\tilde{x}_{e_{1}}^{1}+\tilde{r}_{2}.\tilde{x}_{e_{2}}^{2}+...+%
\tilde{r}_{n}.\tilde{x}_{e_{n}}^{n}=\tilde{\theta}_{0}\Leftrightarrow \tilde{%
r}_{1},\tilde{r}_{2},...,\tilde{r}_{n}=0.
\end{equation*}%
is satisfied for the soft real numbers $\tilde{r}_{i},$ $1\leq i\leq n.$
\end{definition}

\begin{proposition}
A set $S=\left\{ \tilde{x}_{e_{1}}^{1},\tilde{x}_{e_{2}}^{2},...,\tilde{x}%
_{e_{n}}^{n}\right\} $ of soft vectors in $SV(\tilde{X})$ $\ $is linearly
independent if the elements of the set $\left\{
x^{1},x^{2},...,x^{n}\right\} $ in $X$ are linearly independent and the
condition $(r_{1.}e_{1}+r_{2.}e_{2}+...+r_{n.}e_{n})=0$ \ is satisfied.
\end{proposition}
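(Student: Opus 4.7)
The plan is to unpack the definitions directly. I would start by assuming a general linear relation
\begin{equation*}
\tilde{r}_{1}\cdot\tilde{x}^{1}_{e_{1}} + \tilde{r}_{2}\cdot\tilde{x}^{2}_{e_{2}} + \cdots + \tilde{r}_{n}\cdot\tilde{x}^{n}_{e_{n}} = \tilde{\theta}_{0}
\end{equation*}
in $SV(\tilde{X})$, with underlying scalars $r_{i}$. Applying scalar multiplication from Proposition~2 gives $\tilde{r}_{i}\cdot\tilde{x}^{i}_{e_{i}} = (\widetilde{r_{i}x^{i}})_{(r_{i}e_{i})}$, and then iterating the addition rule collapses the left-hand side to the single soft vector $(\widetilde{r_{1}x^{1} + \cdots + r_{n}x^{n}})_{(r_{1}e_{1} + \cdots + r_{n}e_{n})}$.

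Next, I would invoke the equality criterion for soft points (Definition~6): two soft vectors $\tilde{u}_{a}$ and $\tilde{v}_{b}$ coincide precisely when $u = v$ in $X$ and $a = b$ in $E$. Comparing with $\tilde{\theta}_{0}$ splits the single equation above into the two simultaneous conditions
\begin{equation*}
\sum_{i=1}^{n} r_{i}x^{i} = \theta \quad \text{in } X, \qquad \sum_{i=1}^{n} r_{i}e_{i} = 0 \quad \text{in } \mathbb{R}.
\end{equation*}
The hypothesis that $\{x^{1},\ldots,x^{n}\}$ is linearly independent in $X$ then forces $r_{1}=\cdots=r_{n}=0$ from the first condition, which makes the parameter condition $r_{1}e_{1}+\cdots+r_{n}e_{n}=0$ named in the statement automatic. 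Hence $\tilde{r}_{1}=\cdots=\tilde{r}_{n}=\tilde{0}$, giving linear independence of $S$.

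I do not anticipate any genuine obstacle; the argument is essentially a bookkeeping exercise that separates the single equation in $SV(\tilde{X})$ into its vector-part and its parameter-part. The one point requiring care is the identification of each soft scalar $\tilde{r}_{i}$ with its underlying real value $r_{i}$ when applying the operations from Proposition~2 — without this, the decomposition $(\widetilde{\sum r_{i}x^{i}})_{(\sum r_{i}e_{i})}$ of the whole sum is not quite transparent. Once that is in place, the rest reduces to the ordinary linear-independence hypothesis in $X$.
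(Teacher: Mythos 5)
Your proof is correct and follows essentially the same route as the paper's: collapse the soft linear combination into the single soft vector $(\widetilde{r_{1}x^{1}+\cdots +r_{n}x^{n}})_{(r_{1}e_{1}+\cdots +r_{n}e_{n})}$ and split its equality with $\tilde{\theta}_{0}$ into the vector-part and parameter-part conditions. Your added remark that linear independence of $\left\{ x^{1},\ldots ,x^{n}\right\} $ alone already forces $r_{1}=\cdots =r_{n}=0$, so the parameter condition in the hypothesis becomes automatic, is a fair observation that the paper's chain of equivalences leaves implicit.
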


\begin{proof}
For any soft real number $\tilde{r}_{i},$ $1\leq i\leq n$ \ 
\begin{equation*}
\tilde{r}_{1.}\tilde{x}_{e_{1}}^{1}+\tilde{r}_{2}.\tilde{x}_{e_{2}}^{2}+...+%
\tilde{r}_{n}.\tilde{x}_{e_{n}}^{n}=\tilde{\theta}_{0}
\end{equation*}%
\begin{eqnarray*}
&\Leftrightarrow &(\widetilde{r_{1.}x^{1}+r_{2.}x^{2}+...+r_{n.}x^{n}}%
)_{(r_{1.}e_{1}+r_{2.}e_{2}+...+r_{n.}e_{n})}=\tilde{\theta}_{0} \\
&\Leftrightarrow &(\widetilde{r_{1.}x^{1}+r_{2.}x^{2}+...+r_{n.}x^{n}})=%
\tilde{\theta}\text{ and }(r_{1.}e_{1}+r_{2.}e_{2}+...+r_{n.}e_{n})=0 \\
&\Leftrightarrow &\tilde{r}_{1},\tilde{r}_{2},...,\tilde{r}_{n}=\tilde{0}.
\end{eqnarray*}
\end{proof}

\begin{definition}
Let $SV(\tilde{X})$ be a soft vector space and $\tilde{M}\subset SV(\tilde{X}%
)$ \ be a subset. If $\tilde{M}$ is a soft vector space, then $\tilde{M}$ is
said to be a soft vector subspace of $SV(\tilde{X})$ and denoted by $SV(%
\tilde{M})\tilde{\subset}SV(\tilde{X}).$
\end{definition}

\begin{example}
Let us given $\ $a class of soft vectors $\left\{ \tilde{x}%
_{e_{k}}^{k}\right\} _{k=\overline{1,n}}$ . Then the space $\left\{ \underset%
{i=1}{\overset{n}{\dsum }}\tilde{r}_{i}.\tilde{x}_{e_{i}}^{i}\right\} $
generated by the class $\left\{ \tilde{x}_{e_{k}}^{k}\right\} _{k=\overline{%
1,n}}$ is a soft vector subspace.
\end{example}

\begin{example}
If $M\subset X$ is a vector subspace then $SV(\tilde{M})\subset SV(\tilde{X}%
) $ is a soft vector subspace.
\end{example}

By using the definition of the soft vector, we can give the natural
definition of soft norm as follows.

\begin{definition}
\ Let $SV(\tilde{X})$ be a soft vector space. Then a mapping 
\begin{equation*}
\left\Vert .\right\Vert :SV(\tilde{X})\rightarrow 
\mathbb{R}
^{+}(E)
\end{equation*}%
is said to be a soft norm on $SV(\tilde{X}),$ if \ $\left\Vert .\right\Vert $%
satisfies the following conditions:
\end{definition}

\begin{enumerate}
\item[(N1).] $\left\Vert \tilde{x}_{e}\right\Vert \tilde{\geq}\tilde{0}$ 
\textit{for all } $\tilde{x}_{e}\tilde{\in}SV(\tilde{X})$\textit{\ } and $%
\left\Vert \tilde{x}_{e}\right\Vert =\tilde{0}\Leftrightarrow \tilde{x}_{e}=%
\tilde{\theta}_{0};$

\item[(N2).] $\left\Vert \tilde{r}\text{.}\tilde{x}_{e}\right\Vert
=\left\vert \tilde{r}\right\vert \left\Vert \tilde{x}_{e}\right\Vert $ \ 
\textit{for all} $\tilde{x}_{e}\tilde{\in}SV(\tilde{X})$ \textit{and for
every soft scalar} $\tilde{r};$

\item[(N3).] $\left\Vert \tilde{x}_{e}+\tilde{y}_{e^{\prime }}\right\Vert 
\tilde{\leq}\left\Vert \tilde{x}_{e}\right\Vert +\left\Vert \tilde{y}%
_{e^{\prime }}\right\Vert $ for all $\ \tilde{x}_{e},\tilde{y}_{e^{\prime }}%
\tilde{\in}SV(\tilde{X}).$
\end{enumerate}

\textit{The soft vector space }$SV(\tilde{X})$ \textit{with a soft norm }$%
\left\Vert .\right\Vert $\textit{\ on }$\tilde{X}$\textit{\ \ is said to be
a soft normed linear space and is denoted by }$(\tilde{X},\left\Vert
.\right\Vert )$.

\begin{example}
Let $X$ be \ a normed space$.$ In this case, for every $\tilde{x}_{e}\tilde{%
\in}SV(\tilde{X}),$ 
\begin{equation*}
\left\Vert \tilde{x}_{e}\right\Vert =\left\vert e\right\vert +\left\Vert
x\right\Vert
\end{equation*}%
is a soft norm.
\end{example}

\textit{For every }$\tilde{x}_{e},\tilde{y}_{e^{\prime }}\tilde{\in}SV(%
\tilde{X})$\textit{\ and} \textit{for every soft scalar} $\tilde{r};$

\begin{enumerate}
\item[(N1).] $\left\Vert \tilde{x}_{e}\right\Vert =\left\vert e\right\vert
+\left\Vert x\right\Vert \tilde{\geq}\tilde{0},$%
\begin{equation*}
\left\Vert \tilde{x}_{e}\right\Vert =\tilde{0}\Leftrightarrow \left\vert
e\right\vert +\left\Vert x\right\Vert =\tilde{0}\Leftrightarrow e=0,\text{ }%
x=\tilde{\theta}\Leftrightarrow \tilde{x}_{e}=\tilde{\theta}_{0}
\end{equation*}

\item[(N2).] $\left\Vert \tilde{r}\text{.}\tilde{x}_{e}\right\Vert
=\left\Vert \widetilde{(r.x)}_{re}\right\Vert =\left\vert re\right\vert
+\left\Vert r.x\right\Vert =\left\vert r\right\vert \left( \left\vert
e\right\vert +\left\Vert x\right\Vert \right) =\left\vert \tilde{r}%
\right\vert \left\Vert \tilde{x}_{e}\right\Vert .$

\item[(N3).] 
\begin{eqnarray*}
\left\Vert \tilde{x}_{e}+\tilde{y}_{e^{\prime }}\right\Vert &=&\left\Vert (%
\widetilde{x+y})_{(e+e^{\prime })}\right\Vert =\left\vert e+e^{\prime
}\right\vert +\left\Vert x+y\right\Vert \\
&\leq &\left\vert e\right\vert +\left\vert e^{\prime }\right\vert
+\left\Vert x\right\Vert +\left\Vert y\right\Vert \\
&=&(\left\vert e\right\vert +\left\Vert x\right\Vert )+\left( \left\vert
e^{\prime }\right\vert +\left\Vert y\right\Vert \right) \\
&=&\left\Vert \tilde{x}_{e}\right\Vert +\left\Vert \tilde{y}_{e^{\prime
}}\right\Vert .
\end{eqnarray*}
\end{enumerate}

\begin{definition}
A sequence of soft vectors $\left\{ \tilde{x}_{e_{n}}^{n}\right\} $ in $(%
\tilde{X},\left\Vert .\right\Vert )$ is said to be convergent \ to $\tilde{x}%
_{e_{0}}^{0}$ ,if $\underset{n\rightarrow \infty }{lim}\left\Vert \tilde{x}%
_{e_{n}}^{n}-\tilde{x}_{e_{0}}^{0}\right\Vert =\tilde{0}$ and denoted by $%
\tilde{x}_{e_{n}}^{n}\rightarrow \tilde{x}_{\lambda _{0}}^{0}$ as $%
n\rightarrow \infty $.
\end{definition}

\begin{definition}
A sequence of soft vectors $\left\{ \tilde{x}_{e_{n}}^{n}\right\} $ in $(%
\tilde{X},\left\Vert .\right\Vert )$ is said to be a Cauchy sequence if
corresponding to every $\tilde{\varepsilon}\tilde{>}\tilde{0}$ , $\exists
m\in N$ such that $\left\Vert \tilde{x}_{e_{i}}^{i}-\tilde{x}%
_{e_{j}}^{j}\right\Vert \tilde{<}\tilde{\varepsilon},$ $\forall i,j\geq m$
i.e., $\left\Vert \tilde{x}_{e_{i}}^{i}-\tilde{x}_{e_{j}}^{j}\right\Vert
\rightarrow \tilde{0}$ as $i,j\rightarrow \infty .$
\end{definition}

\begin{proposition}
Every convergent sequence is a Cauchy sequence.
\end{proposition}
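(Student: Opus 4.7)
The plan is to transcribe the classical normed-space argument into the soft setting, relying only on axiom (N3) and the definitions of convergence and Cauchy sequence just recorded. Given a sequence $\{\tilde{x}_{e_n}^n\}$ converging to some $\tilde{x}_{e_0}^0$, the goal is to show that for every prescribed $\tilde{\varepsilon}\,\tilde{>}\,\tilde{0}$ there exists an index beyond which all pairwise soft-norm differences are strictly less than $\tilde{\varepsilon}$.

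The execution will proceed in two steps. First, I would apply the convergence hypothesis to the strictly positive soft real $\tilde{\varepsilon}/\tilde{2}$ to extract an index $m \in \mathbb{N}$ with
\begin{equation*}
\|\tilde{x}_{e_n}^n - \tilde{x}_{e_0}^0\| \,\tilde{<}\, \tilde{\varepsilon}/\tilde{2} \qquad \text{for all } n \geq m.
\end{equation*}
Second, for any $i,j \geq m$, I would insert and subtract $\tilde{x}_{e_0}^0$ and apply the soft triangle inequality (N3) to obtain
\begin{equation*}
\|\tilde{x}_{e_i}^i - \tilde{x}_{e_j}^j\| \,\tilde{\leq}\, \|\tilde{x}_{e_i}^i - \tilde{x}_{e_0}^0\| + \|\tilde{x}_{e_0}^0 - \tilde{x}_{e_j}^j\|,
\end{equation*}
and then combine the two bounds to conclude $\|\tilde{x}_{e_i}^i - \tilde{x}_{e_j}^j\| \,\tilde{<}\, \tilde{\varepsilon}$, which is precisely the Cauchy condition stated in the preceding definition.

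I do not anticipate a serious obstacle: soft real order, addition, and scalar multiplication are all defined pointwise, so halving a positive soft real and comparing strict soft inequalities reduce immediately to the classical real case. The only ingredient deserving an explicit (but trivial) check is the symmetry $\|\tilde{x}_{e_0}^0 - \tilde{x}_{e_j}^j\| = \|\tilde{x}_{e_j}^j - \tilde{x}_{e_0}^0\|$ used in the second step; this follows from (N2) applied with the soft scalar $-\tilde{1}$, together with the evident identity $|{-\tilde{1}}| = \tilde{1}$. Once this symmetry is noted, the remainder of the argument is a direct transcription of the classical proof.
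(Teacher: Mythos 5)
Your proof is correct and is exactly the standard argument the paper has in mind: the paper itself offers no proof beyond the remark that it is ``straight forward,'' and your $\tilde{\varepsilon}/\tilde{2}$ triangle-inequality argument, together with the symmetry check via (N2) with scalar $-\tilde{1}$, supplies precisely the missing details. No issues.
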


The proof is straight forward.

\begin{definition}
Let $(\tilde{X},\left\Vert .\right\Vert )$ be a soft normed linear space.
Then $(\tilde{X},\left\Vert .\right\Vert )$ is said to be complete if every
Cauchy sequence in $\tilde{X}$ converges to a soft vector of $\tilde{X}$.
\end{definition}

\begin{definition}
Every complete soft normed linear space is called a soft Banach space.
\end{definition}

\begin{proposition}
Every soft normed space is a soft metric space.
\end{proposition}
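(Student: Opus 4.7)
The plan is to mimic the classical real-analysis argument: whenever we have a norm, the expression $\|x-y\|$ is a metric. Here I would set
\[
\tilde{d}(\tilde{x}_{e},\tilde{y}_{e'}) \;=\; \bigl\|\tilde{x}_{e}-\tilde{y}_{e'}\bigr\|
\]
for all soft vectors $\tilde{x}_{e},\tilde{y}_{e'}\in SV(\tilde{X})$, using the subtraction that comes from the vector-space operations of Proposition 1 (that is, $\tilde{x}_{e}-\tilde{y}_{e'}=(\widetilde{-1})\cdot\tilde{y}_{e'}+\tilde{x}_{e}$). Since soft vectors and soft points coincide (both are defined by a parameter $e$ and a single element $F(e)=\{x\}$), this does give a map $\tilde{d}\colon SP(\tilde{X})\times SP(\tilde{X})\rightarrow \mathbb{R}(E)^{\ast}$, which is what the definition of soft metric requires.

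Next I would verify the four axioms (M1)--(M4) in order, deriving each one from the corresponding norm axiom. Non-negativity (M1) is immediate from (N1). For the identity of indiscernibles (M2), I would chain the equivalences $\tilde{d}(\tilde{x}_{e},\tilde{y}_{e'})=\tilde{0}\iff \|\tilde{x}_{e}-\tilde{y}_{e'}\|=\tilde{0}\iff \tilde{x}_{e}-\tilde{y}_{e'}=\tilde{\theta}_{0}\iff \tilde{x}_{e}=\tilde{y}_{e'}$, again quoting (N1). Symmetry (M3) follows from (N2) applied with $\tilde{r}=\widetilde{-1}$: $\|\tilde{x}_{e}-\tilde{y}_{e'}\|=|\widetilde{-1}|\,\|\tilde{y}_{e'}-\tilde{x}_{e}\|=\|\tilde{y}_{e'}-\tilde{x}_{e}\|$. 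Finally, the triangle inequality (M4) comes from writing $\tilde{x}_{e_{1}}-\tilde{z}_{e_{3}}=(\tilde{x}_{e_{1}}-\tilde{y}_{e_{2}})+(\tilde{y}_{e_{2}}-\tilde{z}_{e_{3}})$ and invoking (N3).

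I do not expect any serious obstacle; the proof is essentially bookkeeping. The only subtle point worth stating explicitly is that the difference $\tilde{x}_{e}-\tilde{y}_{e'}$ is again a single soft vector living in $SV(\tilde{X})$, so that the norm $\|\cdot\|$ is applicable to it. Under the operations of Proposition 1 we have $\tilde{x}_{e}-\tilde{y}_{e'}=(\widetilde{x-y})_{(e-e')}$, which is indeed a soft vector, so the definition of $\tilde{d}$ is well posed and the four axioms transfer without friction.
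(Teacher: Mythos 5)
Your proposal is correct and follows exactly the same route as the paper: define $\tilde{d}(\tilde{x}_{e},\tilde{y}_{e'})=\|\tilde{x}_{e}-\tilde{y}_{e'}\|$ and transfer the norm axioms to the metric axioms (the paper simply asserts that the verification is clear, whereas you spell it out). The details you supply, in particular that $\tilde{x}_{e}-\tilde{y}_{e'}=(\widetilde{x-y})_{(e-e')}$ is again a single soft vector so the norm applies, are accurate and fill in what the paper leaves implicit.
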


\begin{proof}
Let $(\tilde{X},\left\Vert .\right\Vert )$ be a soft normed space. If we
define the soft metric by $\tilde{d}(\tilde{x}_{e},\tilde{y}_{e^{\prime
}})=\left\Vert \tilde{x}_{e}-\tilde{y}_{e^{\prime }}\right\Vert $ for every $%
\tilde{x}_{e},\tilde{y}_{e^{\prime }}\tilde{\in}SV(\tilde{X})$ then it is
clear to show that the soft metric axioms are satisfied.
\end{proof}

\begin{theorem}
Let $\tilde{d}:SV(\tilde{X})\times SV(\tilde{X})\rightarrow 
\mathbb{R}
^{+}(E)$ be \ a soft metric $.$ $SV(\tilde{X})$ is a soft normed space if
and only if \ the following conditions;
\end{theorem}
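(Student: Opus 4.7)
The expected conditions, parallel to the classical fact that a metric comes from a norm iff it is translation invariant and absolutely homogeneous, should be

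(i) $\tilde{d}(\tilde{x}_{e}+\tilde{z}_{e''},\tilde{y}_{e'}+\tilde{z}_{e''})=\tilde{d}(\tilde{x}_{e},\tilde{y}_{e'})$ (translation invariance),

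(ii) $\tilde{d}(\tilde{r}.\tilde{x}_{e},\tilde{r}.\tilde{y}_{e'})=|\tilde{r}|\tilde{d}(\tilde{x}_{e},\tilde{y}_{e'})$ (absolute homogeneity),

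for all soft vectors and every soft scalar $\tilde{r}$. My proof plan is to establish the two implications separately, the forward direction being almost immediate from the norm axioms and the reverse direction being the substantive content.

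For the forward direction, I would assume that $SV(\tilde{X})$ carries a soft norm $\|\cdot\|$ and set $\tilde{d}(\tilde{x}_{e},\tilde{y}_{e'})=\|\tilde{x}_{e}-\tilde{y}_{e'}\|$, as in the previous proposition. Condition (i) then follows because $(\tilde{x}_{e}+\tilde{z}_{e''})-(\tilde{y}_{e'}+\tilde{z}_{e''})=\tilde{x}_{e}-\tilde{y}_{e'}$ by the vector space structure of $SV(\tilde{X})$, and condition (ii) follows from $\tilde{r}.\tilde{x}_{e}-\tilde{r}.\tilde{y}_{e'}=\tilde{r}.(\tilde{x}_{e}-\tilde{y}_{e'})$ together with the axiom (N2). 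This direction is essentially a one-line verification once the metric/norm correspondence is written down.

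For the reverse direction, assume $\tilde{d}$ satisfies (M1)--(M4) together with (i) and (ii), and define
\[
\|\tilde{x}_{e}\|:=\tilde{d}(\tilde{x}_{e},\tilde{\theta}_{0}).
\]
I would then check (N1)--(N3) in turn. Axiom (N1) is immediate from (M1) and (M2), since $\tilde{d}(\tilde{x}_{e},\tilde{\theta}_{0})\tilde{\geq}\tilde{0}$ and equals $\tilde{0}$ precisely when $\tilde{x}_{e}=\tilde{\theta}_{0}$. For (N2), use (ii) with $\tilde{y}_{e'}=\tilde{\theta}_{0}$, noting that $\tilde{r}.\tilde{\theta}_{0}=\tilde{\theta}_{0}$ by the scalar-multiplication rule, so that $\|\tilde{r}.\tilde{x}_{e}\|=\tilde{d}(\tilde{r}.\tilde{x}_{e},\tilde{r}.\tilde{\theta}_{0})=|\tilde{r}|\,\tilde{d}(\tilde{x}_{e},\tilde{\theta}_{0})=|\tilde{r}|\,\|\tilde{x}_{e}\|$. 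For the triangle inequality (N3), I would first use translation invariance (i) to write $\|\tilde{x}_{e}+\tilde{y}_{e'}\|=\tilde{d}(\tilde{x}_{e}+\tilde{y}_{e'},\tilde{\theta}_{0})=\tilde{d}(\tilde{x}_{e},-\tilde{y}_{e'})$, then apply (M4) through $\tilde{\theta}_{0}$ to get $\tilde{d}(\tilde{x}_{e},-\tilde{y}_{e'})\tilde{\leq}\tilde{d}(\tilde{x}_{e},\tilde{\theta}_{0})+\tilde{d}(\tilde{\theta}_{0},-\tilde{y}_{e'})$, and finally use homogeneity (ii) with $\tilde{r}=-\tilde{1}$ plus symmetry (M3) to identify $\tilde{d}(\tilde{\theta}_{0},-\tilde{y}_{e'})$ with $\|\tilde{y}_{e'}\|$.

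The only delicate points will be bookkeeping of the parameter component: since $\tilde{\theta}_{0}$ is indexed by $0\in E$ and $-\tilde{y}_{e'}=(\widetilde{-y})_{-e'}$, I need $\tilde{r}.\tilde{\theta}_{0}=\tilde{\theta}_{0}$ and the correct identification $(-\tilde{1}).\tilde{y}_{e'}=-\tilde{y}_{e'}$, both of which follow directly from the scalar-multiplication rule $\tilde{r}.\tilde{x}_{e}=(\widetilde{rx})_{re}$ given in the proposition on $SV(\tilde{X})$. Once these compatibilities are checked, the rest of the argument is formally identical to the classical proof, so I expect no conceptual obstacle beyond this notational care.
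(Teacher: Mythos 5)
Your proposal reconstructs exactly the two conditions the paper states (translation invariance and absolute homogeneity) and proves both directions in essentially the same way as the paper: the forward direction by defining $\tilde{d}(\tilde{x}_{e},\tilde{y}_{e^{\prime}})=\left\Vert \tilde{x}_{e}-\tilde{y}_{e^{\prime}}\right\Vert$ and the reverse by setting $\left\Vert \tilde{x}_{e}\right\Vert =\tilde{d}(\tilde{x}_{e},\tilde{\theta}_{0})$, with the triangle inequality obtained via translation to $\tilde{d}(\tilde{x}_{e},-\tilde{y}_{e^{\prime}})$ and the triangle inequality of $\tilde{d}$ through $\tilde{\theta}_{0}$. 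Your additional bookkeeping on $\tilde{r}.\tilde{\theta}_{0}=\tilde{\theta}_{0}$ and $(-\tilde{1}).\tilde{y}_{e^{\prime}}=(\widetilde{-y})_{-e^{\prime}}$ is a correct and slightly more careful version of what the paper does implicitly.
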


\begin{enumerate}
\item[a)] $\tilde{d}(\tilde{x}_{e}+\tilde{z}_{e^{\prime \prime }},\tilde{y}%
_{e^{\prime }}+\tilde{z}_{e^{\prime \prime }})=\tilde{d}(\tilde{x}_{e},%
\tilde{y}_{e^{\prime }})$

\item[b)] $\tilde{d}(\tilde{r}.\tilde{x}_{e},\tilde{r}.\tilde{y}_{e^{\prime
}})=\left\vert \tilde{r}\right\vert \tilde{d}(\tilde{x}_{e},\tilde{y}%
_{e^{\prime }})$
\end{enumerate}

satisfied.

\begin{proof}
If $\tilde{d}(\tilde{x}_{e},\tilde{y}_{e^{\prime }})=\left\Vert \tilde{x}%
_{e}-\tilde{y}_{e^{\prime }}\right\Vert ,$ then 
\begin{equation*}
\tilde{d}(\tilde{x}_{e}+\tilde{z}_{e^{\prime \prime }},\tilde{y}_{e^{\prime
}}+\tilde{z}_{e^{\prime \prime }})=\left\Vert \tilde{x}_{e}+\tilde{z}%
_{e^{\prime \prime }}-\tilde{y}_{e^{\prime }}-\tilde{z}_{e^{\prime \prime
}}\right\Vert =\left\Vert \tilde{x}_{e}-\tilde{y}_{e^{\prime }}\right\Vert =%
\tilde{d}(\tilde{x}_{e},\tilde{y}_{e^{\prime }})
\end{equation*}%
and%
\begin{equation*}
\tilde{d}(\tilde{r}.\tilde{x}_{e},\tilde{r}.\tilde{y}_{e^{\prime
}})=\left\Vert \tilde{r}\tilde{x}_{e}-\tilde{r}\tilde{y}_{e^{\prime
}}\right\Vert =\left\vert \tilde{r}\right\vert \left\Vert \tilde{x}_{e}-%
\tilde{y}_{e^{\prime }}\right\Vert =\left\vert \tilde{r}\right\vert \tilde{d}%
(\tilde{x}_{e},\tilde{y}_{e^{\prime }}).
\end{equation*}%
Suppose that the conditions of the proposition are satisfied $.$ Taking $%
\left\Vert \tilde{x}_{e}\right\Vert =\tilde{d}(\tilde{x}_{e},\tilde{\theta}%
_{0})$ \ for every $\tilde{x}_{e}\tilde{\in}SV(\tilde{X})$ we have

\begin{enumerate}
\item[(N1).] $\left\Vert \tilde{x}_{e}\right\Vert =\tilde{d}(\tilde{x}_{e},%
\tilde{\theta}_{0})\tilde{\geq}\tilde{0}$ and $\left\Vert \tilde{x}%
_{e}\right\Vert =\tilde{d}(\tilde{x}_{e},\tilde{\theta}_{0})=\tilde{0}%
\Leftrightarrow \tilde{x}_{e}=\tilde{\theta}_{0};$

\item[(N2).] $\left\Vert \tilde{r}\tilde{x}_{e}\right\Vert =\tilde{d}(\tilde{%
r}\tilde{x}_{e},\tilde{\theta}_{0})=\tilde{d}(\tilde{r}\tilde{x}_{e},\tilde{r%
}.\tilde{\theta}_{0})=\left\vert \tilde{r}\right\vert \tilde{d}(\tilde{x}%
_{e},\tilde{\theta}_{0})=\left\vert \tilde{r}\right\vert \left\Vert \tilde{x}%
_{e}\right\Vert ;$

\item[(N3).] 
\begin{eqnarray*}
\left\Vert \tilde{x}_{e}+\tilde{y}_{e^{\prime }}\right\Vert &=&\tilde{d}(%
\tilde{x}_{e}+\tilde{y}_{e^{\prime }},\tilde{\theta}_{0})=\tilde{d}(\tilde{x}%
_{e},-\tilde{y}_{e^{\prime }}) \\
&\tilde{\leq}&\tilde{d}(\tilde{x}_{e},\tilde{\theta}_{0})+\tilde{d}(\tilde{%
\theta}_{0},-\tilde{y}_{e^{\prime }}) \\
&=&\left\Vert \tilde{x}_{e}\right\Vert +\left\vert -\tilde{1}\right\vert
\left\Vert \tilde{y}_{e^{\prime }}\right\Vert =\left\Vert \tilde{x}%
_{e}\right\Vert +\left\Vert \tilde{y}_{e^{\prime }}\right\Vert .
\end{eqnarray*}
\end{enumerate}
\end{proof}

\begin{definition}
Let $T:SV(\tilde{X})\rightarrow SV(\tilde{Y})$ be a soft mapping. Then $T$
is said to be soft linear operator if
\end{definition}

\begin{enumerate}
\item[(L1).] $T$\textit{\ is additive, i.e.,}$T(\tilde{x}_{e}+\tilde{y}%
_{e^{\prime }})=T(\tilde{x}_{e})+T(\tilde{y}_{e^{\prime }})$\textit{\ for
every }$\tilde{x}_{e},\tilde{y}_{e^{\prime }}\tilde{\in}SV(\tilde{X}),$

\item[(L2).] $T$\textit{\ is homogeneous, i.e., for every soft scalar }$%
\tilde{r},$\textit{\ }$T(\tilde{r}\tilde{x}_{e})=\tilde{r}$\textit{.}$T(%
\tilde{x}_{e})$\textit{\ for every }$\tilde{x}_{e}\tilde{\in}SV(\tilde{X}),$
\end{enumerate}

\begin{definition}
The soft operator $T:SV(\tilde{X})\rightarrow SV(\tilde{Y})$ is said to be
soft continuous at $\tilde{x}_{e_{0}}^{0}\tilde{\in}SV(\tilde{X})$\ if for
every sequence $\left\{ \tilde{x}_{e_{n}}^{n}\right\} $ of soft vectors of $%
\tilde{X}$ with $\tilde{x}_{e_{n}}^{n}\rightarrow \tilde{x}_{e_{0}}^{0}$ as $%
n\rightarrow \infty ,$ we have $T(\tilde{x}_{e_{n}}^{n})\rightarrow T(\tilde{%
x}_{e_{0}}^{0})$ as $n\rightarrow \infty $. If $T$ is soft continuous at
each soft vector of $SV(\tilde{X}),$ then $T$ is said to be soft continuous
operator.
\end{definition}

\begin{definition}
The soft operator $T:SV(\tilde{X})\rightarrow SV(\tilde{Y})$ is said to be
soft bounded,\ if there exists a soft real number $\tilde{M}$ such that 
\begin{equation*}
\left\Vert T(\tilde{x}_{e})\right\Vert \tilde{\leq}\tilde{M}\left\Vert 
\tilde{x}_{e}\right\Vert ,
\end{equation*}%
for all $\tilde{x}_{e}\tilde{\in}SV(\tilde{X}).$
\end{definition}

\begin{theorem}
The soft operator $T:SV(\tilde{X})\rightarrow SV(\tilde{Y})$ is soft
continuous if and only if it is soft bounded.
\end{theorem}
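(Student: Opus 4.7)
The plan is to mimic the classical normed-space proof ``continuous iff bounded,'' exploiting soft linearity (L1)--(L2) together with the soft norm axioms (N1)--(N3), and to use the sequential definition of soft continuity directly.

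For the easier direction (soft bounded $\Rightarrow$ soft continuous), suppose $T$ is soft bounded with bound $\tilde{M}$ and fix any convergent sequence $\tilde{x}_{e_n}^n \to \tilde{x}_{e_0}^0$. By additivity and homogeneity of $T$ one has $T(\tilde{x}_{e_n}^n)-T(\tilde{x}_{e_0}^0)=T(\tilde{x}_{e_n}^n-\tilde{x}_{e_0}^0)$, so the soft bound gives
\[
\left\Vert T(\tilde{x}_{e_n}^n)-T(\tilde{x}_{e_0}^0)\right\Vert \tilde{\leq} \tilde{M}\,\left\Vert \tilde{x}_{e_n}^n-\tilde{x}_{e_0}^0\right\Vert \to \tilde{0},
\]
whence $T(\tilde{x}_{e_n}^n)\to T(\tilde{x}_{e_0}^0)$. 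Since $\tilde{x}_{e_0}^0$ was arbitrary, $T$ is soft continuous.

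For the converse I would argue contrapositively. Assume $T$ is not soft bounded; then for each positive integer $n$ there is a soft vector $\tilde{x}_{e_n}^n$ with $\Vert T(\tilde{x}_{e_n}^n)\Vert \tilde{>} \tilde{n}\,\Vert \tilde{x}_{e_n}^n\Vert$. In particular $\tilde{x}_{e_n}^n\tilde{\neq}\tilde{\theta}_0$, so by (N1) the soft real number $\Vert \tilde{x}_{e_n}^n\Vert$ is strictly positive, which allows one to form the soft scalar $\tilde{r}_n = \tilde{1}/(\tilde{n}\,\Vert \tilde{x}_{e_n}^n\Vert)$ and the rescaled soft vector $\tilde{y}_n = \tilde{r}_n\cdot\tilde{x}_{e_n}^n$. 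Property (N2) yields $\Vert \tilde{y}_n\Vert = \tilde{1}/\tilde{n} \to \tilde{0}$, so $\tilde{y}_n \to \tilde{\theta}_0$, while (N2) applied to $T(\tilde{y}_n)=\tilde{r}_n\cdot T(\tilde{x}_{e_n}^n)$ forces $\Vert T(\tilde{y}_n)\Vert \tilde{>} \tilde{1}$. Thus $T(\tilde{y}_n)$ fails to converge to $T(\tilde{\theta}_0)=\tilde{\theta}_0$, contradicting soft continuity of $T$ at $\tilde{\theta}_0$.

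The main technical obstacle is in the second direction, and lies in the bookkeeping of soft scalars rather than in any deep idea. Because soft scalar multiplication changes the parameter as well, $\tilde{r}\cdot\tilde{x}_e=(\widetilde{rx})_{re}$, one must confirm that $\tilde{r}_n$ is a legitimate soft real number (which requires $\Vert \tilde{x}_{e_n}^n\Vert \tilde{>} \tilde{0}$, i.e.\ (N1) together with $\tilde{x}_{e_n}^n\tilde{\neq}\tilde{\theta}_0$) and that the strict soft inequality $\tilde{>}$ is preserved when multiplied by the positive soft scalar $\tilde{r}_n$. Both points reduce to the pointwise definition of $\tilde{>}$ from Definition~(iii), so they are routine; once they are in place, the rest of the argument is the usual one.
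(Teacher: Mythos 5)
Your proof is correct and follows essentially the same route as the paper's: the easy direction uses the identical estimate $\left\Vert T(\tilde{x}_{e_n}^n)-T(\tilde{x}_{e_0}^0)\right\Vert \tilde{\leq}\tilde{M}\left\Vert \tilde{x}_{e_n}^n-\tilde{x}_{e_0}^0\right\Vert$, and the hard direction uses the same rescaled sequence $\tilde{y}_n=\tilde{x}_{e_n}^n/(\tilde{n}\Vert\tilde{x}_{e_n}^n\Vert)$ converging to $\tilde{\theta}_0$ while $\Vert T(\tilde{y}_n)\Vert\tilde{>}\tilde{1}$, yielding the same contradiction with continuity at the soft zero vector. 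Your added remarks on the legitimacy of the soft scalar $\tilde{r}_n$ and the preservation of strict soft inequalities are points the paper glosses over, but they do not change the argument.
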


\begin{proof}
Assume that $T:SV(\tilde{X})\rightarrow SV(\tilde{Y})$ be soft continuous
and $T$ is not soft bounded. Thus, there exists at least one sequence $%
\left\{ \tilde{x}_{e_{n}}^{n}\right\} $ such that%
\begin{equation}
\left\Vert T(\tilde{x}_{e_{n}}^{n})\right\Vert \tilde{\geq}\tilde{n}%
\left\Vert \tilde{x}_{e_{n}}^{n}\right\Vert ,  \label{1}
\end{equation}%
where $\tilde{n}$ is a soft real number. It is clear that $\tilde{x}%
_{e_{n}}^{n}\neq \tilde{\theta}_{0}$. Let us construct a soft sequence as
follows; 
\begin{equation*}
\tilde{y}_{e_{n}}^{n}=\frac{\tilde{x}_{e_{n}}^{n}}{\tilde{n}\left\Vert 
\tilde{x}_{e_{n}}^{n}\right\Vert }.
\end{equation*}%
It is clear that $\tilde{y}_{e_{n}}^{n}\rightarrow \tilde{\theta}_{0}$ as $%
n\rightarrow \infty .$ Since $T$ is soft continuous, then we have $%
\left\Vert T(\tilde{y}_{e_{n}}^{n})\right\Vert \rightarrow \tilde{0}$ as $%
n\rightarrow \infty .$%
\begin{equation*}
\left\Vert T(\tilde{y}_{e_{n}}^{n})\right\Vert =\left\Vert T\frac{\tilde{x}%
_{e_{n}}^{n}}{\tilde{n}\left\Vert \tilde{x}_{e_{n}}^{n}\right\Vert }%
\right\Vert =\frac{\tilde{1}}{\tilde{n}\left\Vert \tilde{x}%
_{e_{n}}^{n}\right\Vert }\left\Vert T(\tilde{x}_{e_{n}}^{n})\right\Vert 
\tilde{>}\frac{\tilde{n}\left\Vert \tilde{x}_{e_{n}}^{n}\right\Vert }{\tilde{%
n}\left\Vert \tilde{x}_{e_{n}}^{n}\right\Vert }=\tilde{1},
\end{equation*}

which is a contradiction.

Conversely, suppose that $T:SV(\tilde{X})\rightarrow SV(\tilde{Y})$ is soft
bounded and the soft sequence $\left\{ \tilde{x}_{e_{n}}^{n}\right\} $ \ is
convergent to the $\tilde{x}_{e_{0}}^{0}.$ In this case, 
\begin{equation*}
\left\Vert T(\tilde{x}_{e_{n}}^{n})-T(\tilde{x}_{e_{0}}^{0})\right\Vert
=\left\Vert T(\tilde{x}_{e_{n}}^{n}-\tilde{x}_{e_{0}}^{0})\right\Vert \tilde{%
\leq}\tilde{M}\left\Vert \tilde{x}_{e_{n}}^{n}-\tilde{x}_{e_{0}}^{0}\right%
\Vert \rightarrow \tilde{0}
\end{equation*}%
which indicates that $T$ is soft continuous.
\end{proof}

\begin{definition}
Let $T:SV(\tilde{X})\rightarrow SV(\tilde{Y})$ be a soft continuous operator 
$.$ 
\begin{equation*}
\left\Vert T\right\Vert =\inf \left\{ \tilde{M}:\left\Vert T(\tilde{x}%
_{e})\right\Vert \tilde{\leq}\tilde{M}\left\Vert \tilde{x}_{e}\right\Vert
\right\}
\end{equation*}%
is said to be a norm of $T.$
\end{definition}

It is obvious that $\left\Vert T(\tilde{x}_{e})\right\Vert \tilde{\leq}%
\left\Vert T\right\Vert \left\Vert \tilde{x}_{e}\right\Vert .$

\begin{theorem}
Let $T:SV(\tilde{X})\rightarrow SV(\tilde{Y})$ be a soft operator$.$ Then,%
\begin{equation*}
\left\Vert T\right\Vert =\underset{\tilde{x}_{e}\neq \tilde{\theta}_{0}}{%
\sup }\frac{\left\Vert T(\tilde{x}_{e})\right\Vert }{\left\Vert \tilde{x}%
_{e}\right\Vert }=\underset{\left\Vert \tilde{x}_{e}\right\Vert \tilde{\leq}1%
}{\sup }\left\Vert T(\tilde{x}_{e})\right\Vert
\end{equation*}
\end{theorem}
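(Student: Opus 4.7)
The plan is to prove the two equalities in the chain separately, using nothing beyond the definitions of soft norm, soft linear operator, and the operator norm $\|T\|$ given just above. Write $A=\sup_{\tilde x_e\neq \tilde\theta_0}\|T(\tilde x_e)\|/\|\tilde x_e\|$ and $B=\sup_{\|\tilde x_e\|\tilde\leq \tilde 1}\|T(\tilde x_e)\|$; I will show $\|T\|=A$ and $A=B$.

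First I would verify $\|T\|=A$. For the inequality $A\,\tilde\leq\,\|T\|$, the remark just after the definition gives $\|T(\tilde x_e)\|\,\tilde\leq\,\|T\|\,\|\tilde x_e\|$, so dividing by $\|\tilde x_e\|\,\tilde >\,\tilde 0$ (which is legitimate since $\tilde x_e\neq\tilde\theta_0$ implies $\|\tilde x_e\|\,\tilde >\,\tilde 0$ by (N1)) yields $\|T(\tilde x_e)\|/\|\tilde x_e\|\,\tilde\leq\,\|T\|$, and taking the supremum gives $A\,\tilde\leq\,\|T\|$. For the reverse, I observe that $A$ itself is an admissible bound: for $\tilde x_e\neq\tilde\theta_0$ one has $\|T(\tilde x_e)\|\,\tilde\leq\,A\,\|\tilde x_e\|$ by definition of $A$, while for $\tilde x_e=\tilde\theta_0$ the identity $T(\tilde\theta_0)=\tilde\theta_0$ (an immediate consequence of homogeneity (L2) applied with $\tilde r=\tilde 0$) gives $\|T(\tilde\theta_0)\|=\tilde 0=A\,\|\tilde\theta_0\|$. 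Hence $A$ lies in the set whose infimum defines $\|T\|$, so $\|T\|\,\tilde\leq\,A$.

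Next I would show $A=B$. To get $B\,\tilde\leq\,A$, for any $\tilde x_e$ with $\|\tilde x_e\|\,\tilde\leq\,\tilde 1$ the previous step gives $\|T(\tilde x_e)\|\,\tilde\leq\,\|T\|\,\|\tilde x_e\|=A\,\|\tilde x_e\|\,\tilde\leq\,A$. Taking the sup over the unit soft ball yields $B\,\tilde\leq\,A$. For the reverse, pick any $\tilde x_e\neq\tilde\theta_0$ and normalise: set $\tilde y_e=(\tilde 1/\|\tilde x_e\|)\cdot\tilde x_e$. By (N2), $\|\tilde y_e\|=\tilde 1$, so $\tilde y_e$ is in the feasible set defining $B$, and by homogeneity (L2) together with (N2) again,
\begin{equation*}
\|T(\tilde y_e)\|=\bigl\|(\tilde 1/\|\tilde x_e\|)\,T(\tilde x_e)\bigr\|=\|T(\tilde x_e)\|/\|\tilde x_e\|\,\tilde\leq\,B.
\end{equation*}
Supping over $\tilde x_e\neq\tilde\theta_0$ gives $A\,\tilde\leq\,B$.

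The main conceptual point to be careful about is the arithmetic of soft real numbers: the inequalities in (i)--(iv) of Definition~5 are \emph{pointwise in} $e$, so the supremum in $A$ and $B$ must be interpreted as a supremum in the soft-real order, and one must check that dividing the soft inequality $\|T(\tilde x_e)\|\,\tilde\leq\,\tilde M\,\|\tilde x_e\|$ by the positive soft scalar $\|\tilde x_e\|$ preserves the order pointwise, which it does since $\|\tilde x_e\|(e)>0$ for every parameter $e$ when $\tilde x_e\neq\tilde\theta_0$. Apart from this bookkeeping, the proof is the standard real-variable argument transcribed to the soft setting; the normalisation step in showing $A\,\tilde\leq\,B$ is the only nontrivial computation and it is immediate from (N2) and (L2).
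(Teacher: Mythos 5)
Your proof is correct, and it is organized differently from the paper's. The paper proves only the single equality $\left\Vert T\right\Vert =\sup_{\left\Vert \tilde{x}_{e}\right\Vert \tilde{\leq}\tilde{1}}\left\Vert T(\tilde{x}_{e})\right\Vert$ and does so by an $\tilde{\varepsilon}$-argument: since $\left\Vert T\right\Vert$ is an infimum, for every $\tilde{\varepsilon}\tilde{>}\tilde{0}$ there is some $\tilde{x}_{e}$ with $\left\Vert T(\tilde{x}_{e})\right\Vert \tilde{>}(\left\Vert T\right\Vert -\tilde{\varepsilon})\left\Vert \tilde{x}_{e}\right\Vert$, which after the same normalization you use gives $\sup \tilde{\geq}\left\Vert T\right\Vert -\tilde{\varepsilon}$. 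You instead introduce the ratio supremum $A$ as a pivot, prove $\left\Vert T\right\Vert =A$ by showing $A$ is itself an admissible constant in the set whose infimum defines $\left\Vert T\right\Vert$ (checking the zero vector separately via $T(\tilde{\theta}_{0})=\tilde{\theta}_{0}$), and then prove $A=B$ by normalization. This buys you two things: you actually establish both equalities asserted in the statement, whereas the paper leaves the middle term untreated; and you avoid the paper's step of converting ``$\left\Vert T\right\Vert -\tilde{\varepsilon}$ is not an admissible bound'' into the existence of a strict soft inequality $\tilde{>}$, which is delicate because the order on soft real numbers is only a partial order (the negation of $\tilde{\leq}$ is not $\tilde{>}$). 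The cost is that you must interpret the supremum $A$ as a genuine soft real number and upper bound in the soft order, a point you flag correctly; both your argument and the paper's share the remaining definitional looseness about multiplying a soft vector by the non-constant soft scalar $\tilde{1}/\left\Vert \tilde{x}_{e}\right\Vert$, so that is not a defect specific to your write-up.
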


\begin{proof}
Since $\left\Vert T(\tilde{x}_{e})\right\Vert \tilde{\leq}\left\Vert
T\right\Vert \left\Vert \tilde{x}_{e}\right\Vert ,$ we have 
\begin{equation*}
\underset{\left\Vert \tilde{x}_{e}\right\Vert \tilde{\leq}1}{\sup }%
\left\Vert T(\tilde{x}_{e})\right\Vert \tilde{\leq}\underset{\left\Vert 
\tilde{x}_{e}\right\Vert \tilde{\leq}1}{\sup }\left\Vert T\right\Vert
\left\Vert \tilde{x}_{e}\right\Vert \tilde{\leq}\left\Vert T\right\Vert .
\end{equation*}%
Thus, 
\begin{equation}
\underset{\left\Vert \tilde{x}_{e}\right\Vert \tilde{\leq}1}{\sup }%
\left\Vert T(\tilde{x}_{e})\right\Vert \tilde{\leq}\left\Vert T\right\Vert
\label{2}
\end{equation}

On the other hand, from the definition of $\left\Vert T\right\Vert ,$ $%
\forall \tilde{\varepsilon}>\tilde{0},$ $\exists \tilde{x}_{e}$ such that

\begin{equation}
\left\Vert T(\tilde{x}_{e})\right\Vert \tilde{>}\left( \left\Vert
T\right\Vert -\tilde{\varepsilon}\right) \left\Vert \tilde{x}_{e}\right\Vert
\label{3}
\end{equation}

let us take $\tilde{y}_{e}=\frac{\tilde{x}_{e}}{\left\Vert \tilde{x}%
_{e}\right\Vert }$, where $\tilde{x}_{e}\neq \tilde{\theta}_{0}.$ In this
case, we have $\left\Vert \tilde{y}_{e}\right\Vert =\tilde{1}.$ If we write $%
\left\Vert \tilde{x}_{e}\right\Vert \tilde{y}_{e}$ instead of $\tilde{x}_{e}$
in (\ref{3}) we have 
\begin{equation*}
\left\Vert T(\tilde{y}_{e}\left\Vert \tilde{x}_{e}\right\Vert )\right\Vert 
\tilde{>}\left( \left\Vert T\right\Vert -\tilde{\varepsilon}\right)
\left\Vert \tilde{y}_{e}\left\Vert \tilde{x}_{e}\right\Vert \right\Vert
\Rightarrow
\end{equation*}

\begin{equation}
\left\Vert T(\tilde{y}_{e})\right\Vert \tilde{>}\left( \left\Vert
T\right\Vert -\tilde{\varepsilon}\right) \left\Vert \tilde{y}_{e}\right\Vert
=\left( \left\Vert T\right\Vert -\tilde{\varepsilon}\right) .  \label{4}
\end{equation}

If we write $\tilde{x}_{e}$ instead of $\tilde{y}_{e}$ in (\ref{4}), then%
\begin{equation*}
\underset{\left\Vert \tilde{x}_{e}\right\Vert \tilde{\leq}1}{\sup }%
\left\Vert T(\tilde{x}_{e})\right\Vert \tilde{>}\left\Vert T\right\Vert -%
\tilde{\varepsilon}.
\end{equation*}%
Since $\tilde{\varepsilon}$ is arbitary we have%
\begin{equation}
\underset{\left\Vert \tilde{x}_{e}\right\Vert \tilde{\leq}1}{\sup }%
\left\Vert T(\tilde{x}_{e})\right\Vert \tilde{\geq}\left\Vert T\right\Vert .
\label{5}
\end{equation}

From (\ref{2}) and (\ref{5}), we have 
\begin{equation*}
\left\Vert T\right\Vert =\underset{\tilde{x}_{e}\neq \tilde{\theta}_{0}}{%
\sup }\frac{\left\Vert T(\tilde{x}_{e})\right\Vert }{\left\Vert \tilde{x}%
_{e}\right\Vert }=\underset{\left\Vert \tilde{x}_{e}\right\Vert \tilde{\leq}1%
}{\sup }\left\Vert T(\tilde{x}_{e})\right\Vert .
\end{equation*}
\end{proof}

\begin{theorem}
Let $T:SV(\tilde{X})\rightarrow SV(\tilde{Y})$ be a soft operator then $%
\left\Vert T\right\Vert $is a soft norm.
\end{theorem}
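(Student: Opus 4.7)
The plan is to verify the three soft norm axioms (N1), (N2), (N3) for $\|T\|$ by leveraging the supremum characterization established in the previous theorem, namely
\begin{equation*}
\|T\| = \sup_{\tilde{x}_e \neq \tilde{\theta}_0} \frac{\|T(\tilde{x}_e)\|}{\|\tilde{x}_e\|} = \sup_{\|\tilde{x}_e\| \tilde{\leq} \tilde{1}} \|T(\tilde{x}_e)\|.
\end{equation*}
This formulation converts every norm axiom for $\|T\|$ into a statement about the already-known soft norm on $SV(\tilde{Y})$, which is the main simplification.

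For (N1), I would observe that each ratio $\|T(\tilde{x}_e)\|/\|\tilde{x}_e\|$ lies in $\mathbb{R}^+(E)$ by property (N1) of the target soft norm, so the supremum satisfies $\|T\| \tilde{\geq} \tilde{0}$. For the equality case, if $T$ is the zero operator then $T(\tilde{x}_e) = \tilde{\theta}_0$ for all $\tilde{x}_e$, giving $\|T(\tilde{x}_e)\| = \tilde{0}$ and hence $\|T\| = \tilde{0}$. Conversely, if $\|T\| = \tilde{0}$, then $\|T(\tilde{x}_e)\| \tilde{\leq} \tilde{0} \cdot \|\tilde{x}_e\| = \tilde{0}$ for every $\tilde{x}_e \neq \tilde{\theta}_0$, so by (N1) on $SV(\tilde{Y})$ we get $T(\tilde{x}_e) = \tilde{\theta}_0$; and $T(\tilde{\theta}_0) = \tilde{\theta}_0$ follows from homogeneity (L2), showing $T \equiv \tilde{\theta}_0$.

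For (N2), for any soft scalar $\tilde{r}$ and any $\tilde{x}_e \neq \tilde{\theta}_0$, homogeneity of $T$ and (N2) on $SV(\tilde{Y})$ give $\|(\tilde{r} T)(\tilde{x}_e)\| = \|\tilde{r} \cdot T(\tilde{x}_e)\| = |\tilde{r}|\,\|T(\tilde{x}_e)\|$, so dividing by $\|\tilde{x}_e\|$ and taking the supremum yields $\|\tilde{r} T\| = |\tilde{r}|\,\|T\|$. For (N3), additivity of $T_1 + T_2$ together with (N3) on $SV(\tilde{Y})$ gives
\begin{equation*}
\frac{\|(T_1 + T_2)(\tilde{x}_e)\|}{\|\tilde{x}_e\|} \tilde{\leq} \frac{\|T_1(\tilde{x}_e)\|}{\|\tilde{x}_e\|} + \frac{\|T_2(\tilde{x}_e)\|}{\|\tilde{x}_e\|} \tilde{\leq} \|T_1\| + \|T_2\|,
\end{equation*}
so the supremum over $\tilde{x}_e \neq \tilde{\theta}_0$ respects this bound, yielding $\|T_1 + T_2\| \tilde{\leq} \|T_1\| + \|T_2\|$.

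The step I expect to be the main obstacle is the reverse direction of (N1), where $\|T\| = \tilde{0}$ must be shown to force $T$ to vanish on \emph{every} soft vector rather than merely have supremum zero in some weaker sense. Because the soft order is defined pointwise over the parameter set $E$, one must be careful that the supremum used in the previous theorem is genuinely an element of $\mathbb{R}^+(E)$ and that $\|T\| = \tilde{0}$ truly implies the pointwise vanishing of $\|T(\tilde{x}_e)\|$; once this is granted, the rest of the argument reduces to mechanically transferring the three soft norm axioms from $SV(\tilde{Y})$ through the supremum, as above.
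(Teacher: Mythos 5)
Your proposal is correct and follows essentially the same route as the paper: both verify (N1)--(N3) directly from the supremum characterization $\left\Vert T\right\Vert =\sup_{\left\Vert \tilde{x}_{e}\right\Vert \tilde{\leq}\tilde{1}}\left\Vert T(\tilde{x}_{e})\right\Vert$ by transferring the corresponding axioms of the soft norm on $SV(\tilde{Y})$ through the supremum. If anything, your treatment of the converse direction of (N1) (deducing $T(\tilde{x}_{e})=\tilde{\theta}_{0}$ from $\left\Vert T(\tilde{x}_{e})\right\Vert \tilde{\leq}\left\Vert T\right\Vert \left\Vert \tilde{x}_{e}\right\Vert =\tilde{0}$ and handling $\tilde{\theta}_{0}$ via homogeneity) is more careful than the paper's, which asserts that step without justification.
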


\begin{proof}

\begin{enumerate}
\item[(N1)] $\left\Vert T\right\Vert \tilde{\geq}\tilde{0}.$ If $\left\Vert
T\right\Vert =\tilde{0},$ then for all $\tilde{x}_{e}\tilde{\in}SV(\tilde{X}%
) $ we have $T(\tilde{x}_{e})=\tilde{\theta}_{0}$ so that $T=\theta .$

\item[(N2)] $\left\Vert \tilde{r}.T\right\Vert =\underset{\left\Vert \tilde{x%
}_{e}\right\Vert =\tilde{1}}{\sup }\left\Vert \tilde{r}.T(\tilde{x}%
_{e})\right\Vert =\underset{\left\Vert \tilde{x}_{e}\right\Vert =\tilde{1}}{%
\sup }\left\vert \tilde{r}\right\vert \left\Vert T(\tilde{x}_{e})\right\Vert
=\left\vert \tilde{r}\right\vert \underset{\left\Vert \tilde{x}%
_{e}\right\Vert =\tilde{1}}{\sup }\left\Vert T(\tilde{x}_{e})\right\Vert =%
\tilde{r}.\left\Vert T\right\Vert $

\item[(N3)] 
\begin{eqnarray*}
\left\Vert T+S\right\Vert &=&\underset{\left\Vert \tilde{x}_{e}\right\Vert 
\tilde{\leq}\tilde{1}}{\sup }\left\Vert \left( T+S\right) (\tilde{x}%
_{e})\right\Vert =\underset{\left\Vert \tilde{x}_{e}\right\Vert \tilde{\leq}%
\tilde{1}}{\sup }\left\Vert T(\tilde{x}_{e})+S(\tilde{x}_{e})\right\Vert \\
&\tilde{\leq}&\underset{\left\Vert \tilde{x}_{e}\right\Vert \tilde{\leq}%
\tilde{1}}{\sup }\left\Vert T(\tilde{x}_{e})\right\Vert +\underset{%
\left\Vert \tilde{x}_{e}\right\Vert \tilde{\leq}\tilde{1}}{\sup }\left\Vert
S(\tilde{x}_{e})\right\Vert =\left\Vert T\right\Vert +\left\Vert
S\right\Vert .
\end{eqnarray*}
\end{enumerate}
\end{proof}

\begin{theorem}
Let $T:SV(\tilde{X})\rightarrow SV(\tilde{Y})$ \ and $S:SV(\tilde{Y}%
)\rightarrow SV(\tilde{Z})$ be two soft operators$.$Then
\end{theorem}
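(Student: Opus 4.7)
The theorem statement is cut off, but given the setup (two composable soft linear operators between soft normed spaces) the expected conclusion is the submultiplicativity of the operator soft norm, namely that $S\circ T:SV(\tilde{X})\to SV(\tilde{Z})$ is a soft bounded (hence soft continuous) linear operator with
\[
\|S\circ T\|\,\tilde{\leq}\,\|S\|\,\|T\|.
\]
I will plan the proof around this conclusion.

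The plan is to piggy-back on the characterization of $\|\cdot\|$ proved in the previous theorem, that is, $\|U(\tilde{x}_e)\|\tilde{\leq}\|U\|\,\|\tilde{x}_e\|$ for every soft bounded $U$ and every $\tilde{x}_e\tilde{\in}SV(\tilde{X})$. First I would check that the composition $S\circ T$ is itself a soft linear operator: additivity and homogeneity are immediate from conditions (L1), (L2) applied successively to $T$ and then to $S$, and this step is routine. Next I would show soft boundedness by the two-step estimate
\[
\|(S\circ T)(\tilde{x}_e)\|=\|S(T(\tilde{x}_e))\|\,\tilde{\leq}\,\|S\|\,\|T(\tilde{x}_e)\|\,\tilde{\leq}\,\|S\|\,\|T\|\,\|\tilde{x}_e\|,
\]
valid for every $\tilde{x}_e\tilde{\in}SV(\tilde{X})$, where both inequalities use the inequality recalled above.

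From this estimate the soft real number $\|S\|\,\|T\|$ belongs to the set
\[
\{\tilde{M}:\|(S\circ T)(\tilde{x}_e)\|\,\tilde{\leq}\,\tilde{M}\,\|\tilde{x}_e\|\text{ for all }\tilde{x}_e\},
\]
so by the definition of the operator soft norm we immediately obtain $\|S\circ T\|\tilde{\leq}\|S\|\,\|T\|$. Soft continuity of $S\circ T$ then follows from the previous theorem (soft bounded $\Leftrightarrow$ soft continuous).

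The only delicate point I anticipate is the order-theoretic handling of the soft real inequalities: one must know that multiplication by a non-negative soft real number preserves $\tilde{\leq}$, so that $\|S\|\,\|T(\tilde{x}_e)\|\tilde{\leq}\|S\|\,\|T\|\,\|\tilde{x}_e\|$ really follows pointwise in the parameter. This is immediate from the componentwise definition of $\tilde{\leq}$ in Definition~5, but worth stating explicitly. Apart from that, the argument is a direct transcription of the classical Banach-space proof and should not present any real obstacle.
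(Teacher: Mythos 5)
Your reconstruction of the conclusion is right for part (a) of the theorem, and your proof is essentially the paper's: both rest on the chained estimate $\left\Vert S(T(\tilde{x}_{e}))\right\Vert \tilde{\leq}\left\Vert S\right\Vert \left\Vert T(\tilde{x}_{e})\right\Vert \tilde{\leq}\left\Vert S\right\Vert \left\Vert T\right\Vert \left\Vert \tilde{x}_{e}\right\Vert$; the paper then takes the supremum over $\left\Vert \tilde{x}_{e}\right\Vert \tilde{\leq}\tilde{1}$ while you invoke the infimum definition of the operator norm directly, an immaterial difference. The truncated statement hid a part (b), namely $\left\Vert T^{n}\right\Vert \tilde{\leq}\left\Vert T\right\Vert ^{n}$ for $T:SV(\tilde{X})\rightarrow SV(\tilde{X})$, which the paper obtains by iterating part (a); your argument extends to it immediately by induction, so nothing substantive is missing.
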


\begin{enumerate}
\item[a)] 
\begin{equation*}
\left\Vert S\circ T\right\Vert \tilde{\leq}\left\Vert S\right\Vert
\left\Vert T\right\Vert ;
\end{equation*}

\item[b)] If $T:SV(\tilde{X})\rightarrow SV(\tilde{X})$ is a soft operator,
then 
\begin{equation*}
\left\Vert T^{n}\right\Vert \tilde{\leq}\left\Vert T\right\Vert ^{n}.
\end{equation*}%
is satisfied.
\end{enumerate}

\begin{proof}

\begin{enumerate}
\item[a)] 
\begin{eqnarray*}
\left\Vert S\circ T\right\Vert &=&\sup \left\{ \left\Vert \left( S\circ
T\right) \left( \tilde{x}_{e}\right) \right\Vert :\left\Vert \tilde{x}%
_{e}\right\Vert \tilde{\leq}\tilde{1}\right\} \\
&=&\sup \left\{ \left\Vert S(T\left( \tilde{x}_{e}\right) )\right\Vert
:\left\Vert \tilde{x}_{e}\right\Vert \tilde{\leq}\tilde{1}\right\} \\
&\tilde{\leq}&\sup \left\{ \left\Vert S\right\Vert .\left\Vert T\left( 
\tilde{x}_{e}\right) \right\Vert :\left\Vert \tilde{x}_{e}\right\Vert \tilde{%
\leq}\tilde{1}\right\} \\
&\tilde{\leq}&\left\Vert S\right\Vert \left\Vert T\right\Vert
\end{eqnarray*}

\item[b)] If we take $T=S$ then we have $\left\Vert T^{2}\right\Vert \tilde{%
\leq}\left\Vert T\right\Vert ^{2}$. Then $\left\Vert T^{n}\right\Vert \tilde{%
\leq}\left\Vert T\right\Vert ^{n}$ is obtained \ 
\end{enumerate}
\end{proof}

\section{\textbf{\ }\noindent}

\end{document}